\numberwithin{equation}{section}
\theoremstyle{plain}
\newtheorem{theorem}{Theorem}
\newtheoremstyle{thm}
  {10pt}%   Space before theorem
  {6pt}%   Space after theorem
  { \slshape}%  Body font
  {}%          Indent amount (empty = no indent, \parindent = para indent)
  {\bfseries \scshape}% Thm head font   
  {. }%         Punctuation after thm head
  { }%  Space after thm head (\newline = linebreak)
  {}% Thm head spec
\theoremstyle{thm} 
\newtheorem{prop}{Proposition}
\newtheorem{lem}[prop]{Lemma}
\newtheorem{question}[prop]{Question}
\numberwithin{prop}{section}
\patchcmd{\section}{\scshape}{\scshape\large}{}{}
\patchcmd{\section}{.7}{1.4}{}{}
\patchcmd{\section}{.5}{1.2}{}{}
\title{Graphs with prescribed local neighborhoods of their universal coverings.}
\author{Charles Bordenave and Simon Coste}
\date{\today}
\begin{document}
\bibliographystyle{alpha}

\begin{abstract}
Given a collection of $n$ rooted trees with depth $h$, we give a necessary and sufficient condition for this collection to be the collection of $h$-depth universal covering neighborhoods at each vertex. 
\end{abstract}

\maketitle

\section{Reconstruction of a graph with its universal covering.}

Let $G=(V,E)$ be a finite, connected graph. A graph $G' = (V',E')$ is a \emph{covering} of $G$ if there is a surjective map $\iota : V' \to V$ which is a local isomorphism:  for every $x \in V'$,  $\iota$ induces a bijection between the edges incident to $x$ and the edges incident to $\iota(x)$.  The \emph{universal covering} of $G$, denoted by $T_G$, is the unique (up to isomorphism) covering  which is a tree. Note that $T_G$ can be infinite if $G$ is not itself a tree. For instance, the universal covering of any $d$-regular graph is the infinite $d$-regular tree $\mathbb{T}_d$. Note that $T_G$ is also the universal covering of any covering of $G$.

Let $h$ be a positive integer. Given any vertex $x$ of $G$, its $h$-depth universal covering neighborhood is the unlabeled ball of radius $h$ in $T_G$ around any antecedent of $x$ by $\iota$. One can easily see that this ball does not depend --- up to isomorphism --- on the chosen antecedent $x$. 

\begin{figure}[H]\centering
\includegraphics[scale=0.9]{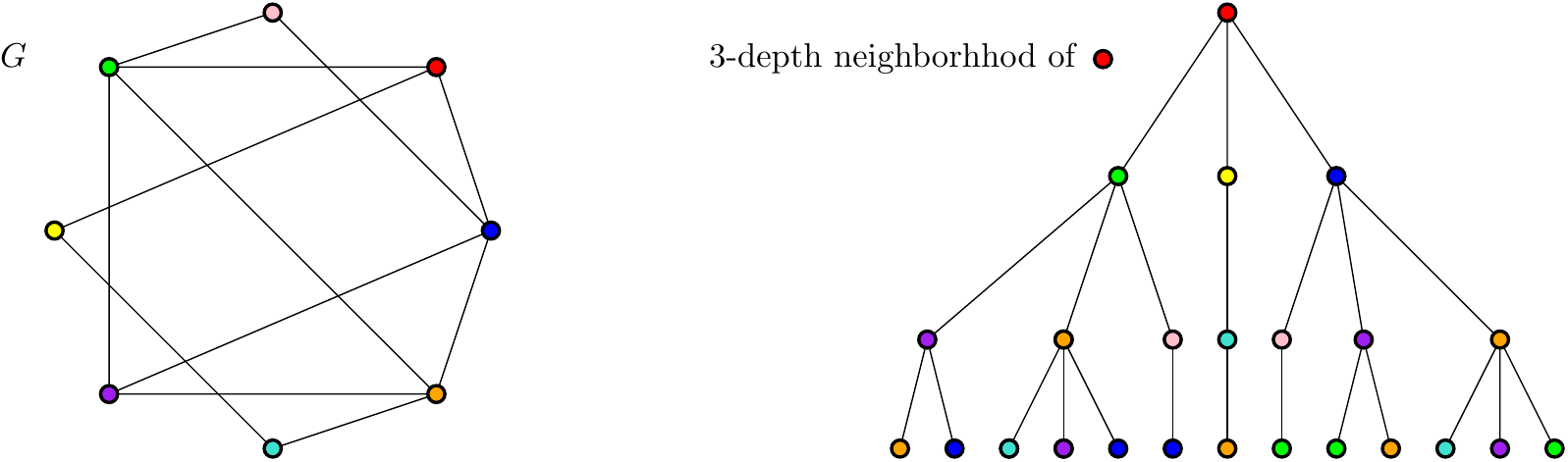}
\caption{The graph $G$ is on the left ; the $3$-depth neighborhood of the red vertex is drawn on the right. Two vertex having the same color in $T_G$ are antecedents of the vertex with the corresponding color in $G$. }
\end{figure}

\begin{question}\label{question}Let $\mathbf{t}=(t_1, ..., t_n)$ be a collection of $n$ unlabeled rooted trees with maximal depth $h$. Is it the collection of $h$-depth universal covering neighborhoods of some (simple) graph $G$ ? 
\end{question}

If this is the case, we call the $n$-tuple $\mathbf{t}$ a \emph{graphical $h$-neighborhood} and we say that $G$ is a \emph{realization} of $\mathbf{t}$.

%\bigskip

\subsection{Notation} From now on, we will adopt the term ``tree" instead of ``unlabeled, rooted tree", unless explicitly stated otherwise. If $G$ is any graph and $x$ is a vertex of $G$, we will note $\mathrm{deg}_G(x)$ the number of neighbors of $x$ in $G$. In a directed graph $G$, we will note $\mathrm{deg}^\pm_G$ the in and out degree of the vertices. Generally, the root of a (rooted) graph will be noted $\bullet$ and if $k$ is an integer and $g$ a rooted graph, $(g)_k$ denotes the ball $B_g(\bullet, k)$ of radius $k$ around the root of $g$. The set of all (unlabeled, rooted) trees with depth $h$ will be noted $\mathscr{T}_h$.  

\subsection{Related work.}\emph{Graph reconstruction problems} ask the following question : given any property $\mathscr{P}$ about graphs, how can we ensure that there is a graph (or digraph, or multigraph) having this property $\mathscr{P}$ ? What is the number of graphs that have this property ? Can we determine the properties $\mathscr{P}$ that have a single graph realization ?

Reconstructing a graph (or bipartite graph, or digraph) only by the list of its degree has been a well-known and studied problem since the seminal works of Erdös, Gallai and many others. In fact, question \ref{question} had been settled long time ago for $h=1$ by the celebrated Erdös-Gallai theorem. Suppose that $\mathbf{t}=(t_1, ..., t_n)$ is an $n$-tuple of 1-depth trees. A $1$-depth tree $t_i$ is just a root with some leaves, say $d_i$ leaves ; thus, a $1$-depth neighborhood can be identified with a $n$-tuple of integers $(d_1, ..., d_n)$.  

\begin{figure}[H]\centering
\includegraphics[scale=0.8]{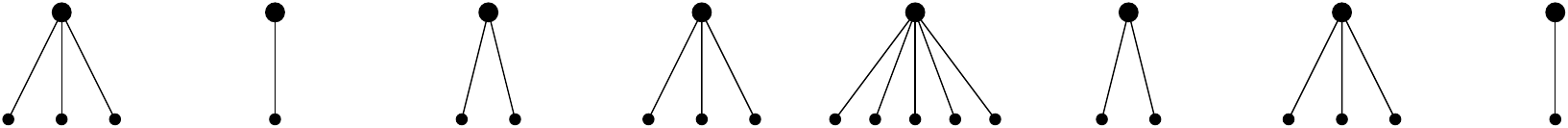}
\caption{Is there a graph on $8$ vertices with this $1$-depth neighborhood ? The associated degree sequence if $\mathbf{d}=(3, 1, 2, 3, 5, 2, 3, 1)$. According to Theorem \ref{thm:EG_simple}, yes.}
\end{figure}

Finding a graph $G$ with $\mathbf{t}$ as $1$-depth neighborhood boils down to finding a graph $G$ with degree sequence $\mathbf{d}$ --- such sequences are called \emph{graphical}. All integer sequences are not graphical ; the Erdös-Gallai theorem gives one necessary and sufficient condition for an integer sequence to be graphical.

\begin{theorem}[Erdös, Gallai, \cite{ERDOSGALLAI}]\label{thm:EG_simple}
Let $\mathbf{d}=(d_1, ..., d_n)$ be a $n$-tuple of integers. Rearrange them in decreasing order $d_{(1)}\geqslant ... \geqslant d_{(n)}$. Then, $\mathbf{d}$ is graphical if and only if it satisfies the two following conditions : 
\begin{equation}\label{EG:even}
d_1+\cdots +d_n \text{ is even}, 
\end{equation}
and the ``Erdös-Gallai condition"
\begin{equation}\label{EG:comp}
\forall k \in [n], \quad d_1+\cdots +d_k \leqslant k(k-1) + \sum_{i=k+1}^n d_i \wedge k.
\end{equation}
\end{theorem}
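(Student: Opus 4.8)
The Erdős–Gallai theorem is a classical result, and I want to sketch a clean, self-contained proof plan.

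The necessity direction should come first and is relatively routine. Given a realization $G$ with degree sequence sorted as $d_1 \geqslant \cdots \geqslant d_n$, the parity condition follows from the handshake lemma, since $d_1 + \cdots + d_n = 2|E|$. For the Erdős–Gallai condition, fix $k$ and count the edges incident to the $k$ highest-degree vertices. Write $S = \{v_1, \dots, v_k\}$ for these vertices. Edges with both endpoints in $S$ contribute at most $\binom{k}{2}$, each counted twice in $\sum_{i \le k} d_i$, giving the term $k(k-1)$. Each remaining endpoint in $S$ is matched to a vertex outside $S$, and a vertex $j > k$ can absorb at most $\min(d_j, k)$ such edges (at most its own degree, and at most $k$ since there are only $k$ vertices in $S$). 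Summing yields $\sum_{i \le k} d_i \le k(k-1) + \sum_{j>k} d_j \wedge k$.

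The sufficiency direction is the heart of the argument and the main obstacle. The plan is to proceed by induction, realizing the sequence via an explicit constructive scheme rather than a pure existence argument. I would use the Havel–Hakimi reduction as the engine: assuming the sequence is sorted decreasingly with $d_1 \geqslant 1$, connect the top vertex $v_1$ to the next $d_1$ vertices of highest degree, delete $v_1$, and decrement those $d_1$ degrees by one, producing a shorter sequence $\mathbf{d}'$. The crux is to verify that this reduced sequence, after re-sorting, still satisfies the Erdős–Gallai conditions, so that the induction hypothesis applies and returns a realization of $\mathbf{d}'$ to which we re-attach $v_1$. The parity is preserved automatically since we remove exactly $d_1$ from the total. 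The delicate point is checking that the inequality \eqref{EG:comp} is inherited after the decrement-and-resort operation; a careful case analysis splitting the index $k$ according to whether the partial sums fall among the decremented coordinates is needed here, and establishing this invariant is where essentially all the work lies.

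An alternative I would keep in reserve, in case the Havel–Hakimi bookkeeping becomes unwieldy, is the flow/threshold argument: one encodes realizability as the existence of a symmetric $0/1$ matrix with prescribed row sums and zero diagonal, and invokes the Gale–Ryser criterion together with a comparison against the dominating \emph{threshold sequence}, for which \eqref{EG:comp} being tight characterizes extremal realizability. This route makes the structural meaning of the inequality transparent but requires importing more machinery. For a self-contained treatment I would favor the inductive construction, since it additionally yields an explicit algorithm producing the graph $G$, which fits the reconstruction theme of the surrounding discussion on graphical sequences and on Question~\ref{question}.
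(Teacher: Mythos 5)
You should first note that the paper contains no proof of Theorem \ref{thm:EG_simple} to compare against: it is imported as a classical result, with the citation \cite{ERDOSGALLAI} and a pointer to the short constructive proof in \cite{tripathi2003note}. So the relevant question is only whether your sketch stands on its own. Your necessity direction does: parity is the handshake lemma, and your count for \eqref{EG:comp} is exactly right --- edges inside the top-$k$ set $S$ contribute at most $k(k-1)$ to the degree sum, and each vertex $j \notin S$ absorbs at most $d_j \wedge k$ edges leaving $S$, the cap at $k$ using simplicity (at most one edge to each vertex of $S$). That half is complete.

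The sufficiency direction, however, is a plan rather than a proof, and the missing piece is the entire content of the theorem in that direction. Your induction works \emph{if and only if} the key invariant holds: that the Havel--Hakimi decrement-and-resort applied to a sequence satisfying \eqref{EG:even}--\eqref{EG:comp} produces a sequence again satisfying \eqref{EG:comp}. You state this lemma and explicitly defer it (``where essentially all the work lies''), so nothing is actually proven. The invariant is true --- it is equivalent, via the Havel--Hakimi theorem, to Erdős--Gallai itself --- but proving it \emph{directly} without that circularity is genuinely delicate: after deleting $d_{(1)}$ and decrementing the next $d_{(1)}$ entries, the sorted order shifts, and the inequality at a given $k$ must be matched against the old inequalities at displaced indices, with separate cases depending on where $k$ falls relative to the decremented block and on ties within it. This bookkeeping is notoriously unpleasant, which is precisely why the standard short arguments avoid the full Havel--Hakimi step: Choudum's induction decrements only $d_{(1)}$ and the last positive entry by one (a much tamer perturbation of \eqref{EG:comp}), and the proof in \cite{tripathi2003note} builds the realization greedily while maintaining a weaker invariant. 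One further small point: in the reattachment step you must realize $\mathbf{d}'$ with the decremented degrees sitting on the designated vertices; this is fine because a realization of a sequence can carry any assignment of the degrees to labeled vertices, but it should be said. To turn your outline into a proof, either carry out the inheritance case analysis in full or switch to one of these tamer reductions; your fallback via Gale--Ryser and threshold sequences is likewise only gestured at and would need the comparison argument made precise.
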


A short and constructive proof is available at \cite{tripathi2003note}. In fact, the Erdös-Gallai condition is not the only sufficient and necessary condition for an integer sequence to be graphical ; there are some other (equivalent) conditions, notably listed in \cite{EGSURV}. The corresponding realization problem for \emph{digraphs} had also been solved quite early ; see the interesting note \cite{annabell} for a complete history and presentation of the many variants.

\begin{theorem}\label{thm:EG_directed}
Let $\mathbf{d}^\pm=(d_i^+, d_i^-)_{i \in [n]}$ be a $2n$-tuple of integers. We order the first component by decreasing order $d_{(1)}^+ \geqslant ... \geqslant d_{(n)}^+$. Then, $\mathbf{d}^\pm$ is the sequence of oriented in and out degrees of some digraph $G$ if and only if it satisfies the two following conditions : 
\begin{equation}
\sum_{i=1}^n d_i^+ = \sum_{i=1}^n d_i^-, 
\end{equation}
and the ``directed Erdös-Gallai condition" : 
\begin{equation}
\forall k \in [n], \quad \sum_{i=1}^k d^+_{(i)} \leqslant \sum_{i=1}^k d^-_{(i)}\wedge (k-1) + \sum_{i=k+1}^n k \wedge d_{(i)}^-
\end{equation}
where the couples $(d^+_{(i)}, d^-_{(i)})$ are sorted in lexicographic order. 
\end{theorem}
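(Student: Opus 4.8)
The plan is to recast the problem in terms of $0$--$1$ matrices. A simple digraph on the vertex set $[n]$ is the same datum as a matrix $A\in\{0,1\}^{n\times n}$ with zero diagonal (we forbid loops), the out-degree of $i$ being the row sum $\sum_j A_{ij}$ and the in-degree of $j$ the column sum $\sum_i A_{ij}$. Thus $\mathbf{d}^\pm$ is realizable if and only if there is a zero-diagonal $0$--$1$ matrix with prescribed row sums $d_i^+$ and column sums $d_i^-$. Writing $i^+$ for the out-copy and $j^-$ for the in-copy of each vertex, this is the same as building a bipartite graph between $\{1^+,\dots,n^+\}$ and $\{1^-,\dots,n^-\}$ with degree sequences $(d_i^+)$ and $(d_j^-)$, subject to the constraint that the $n$ ``diagonal'' edges $i^+ i^-$ are forbidden. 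This reformulation isolates the purely degree-theoretic content (a Gale--Ryser type statement) from the no-loops constraint.

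Necessity is the easy direction and follows from counting arcs. Fix $k$ and let $S$ be a set of $k$ vertices; each arc leaving $S$ either stays in $S$ or escapes it. An arc landing at $i\in S$ from within $S$ is one of at most $\min(d_i^-,k-1)$ such arcs, while an arc landing at $j\notin S$ from $S$ is one of at most $\min(d_j^-,k)$. Since the arcs leaving $S$ number exactly $\sum_{i\in S}d_i^+$, we obtain
\[
\sum_{i\in S} d_i^+ \leqslant \sum_{i\in S} d_i^-\wedge(k-1) + \sum_{j\notin S} d_j^-\wedge k .
\]
Applying this to $S$ the set of the $k$ vertices of largest out-degree yields precisely the directed Erd\H{o}s--Gallai inequality, and the identity $\sum_i d_i^+=\sum_i d_i^-$ is just the two ways of counting all arcs.

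For sufficiency I would argue by a max-flow argument on the bipartite network above (or, equivalently, by a Kleitman--Wang / Havel--Hakimi reduction): add a source $s$ joined to each $i^+$ with capacity $d_i^+$, a sink $t$ fed by each $j^-$ with capacity $d_j^-$, and unit-capacity arcs $i^+\to j^-$ for all $j\neq i$. By integrality of maximum flows, a realizing digraph exists exactly when the maximum $s$--$t$ flow saturates the source, and by max-flow/min-cut this happens iff every cut has capacity at least $\sum_i d_i^+$. The entire force of sufficiency is then to show that this exponential family of cut inequalities is implied by the single sorted inequality in the statement.

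The main obstacle is precisely this last reduction, interacting with the forbidden diagonal. The deleted edges $i^+ i^-$ are exactly what turn the $\min$ with $k$ of the loopless Gale--Ryser theorem into a $\min$ with $k-1$ on the internal block, so one must track how many forbidden positions fall inside the set $S$ defining a given cut. The technical heart is an exchange/threshold property: among all cuts, the worst is always induced by an initial segment of the sequence sorted in lexicographic order on the pairs $(d_i^+,d_i^-)$, which collapses the cut conditions to the $n$ inequalities indexed by $k$. It is here that the lexicographic tie-breaking is essential, since it is what guarantees that moving a vertex into the down-set $S$ can only tighten, never loosen, the relevant bound; verifying this monotonicity carefully is the step I expect to require the most work.
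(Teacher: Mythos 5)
The paper never proves Theorem \ref{thm:EG_directed}: it is imported as a classical result (Fulkerson--Chen--Anstee) with a pointer to \cite{annabell}, so there is no internal proof to compare yours against, and I assess your argument on its own merits. Your necessity direction is complete and correct: the arc-counting bound holds for every $k$-element set $S$ (the $k-1$ coming precisely from the absence of loops), and applying it to the first $k$ vertices of the lexicographic order gives exactly the stated inequality. Your flow reformulation of sufficiency is also right, and the cut analysis comes out as you predict: a minimum cut is parameterized by the set $X$ of out-copies kept on the source side, with capacity $\sum_{i\notin X} d_i^+ + \sum_{j} d_j^- \wedge \big(|X| - \mathbf{1}_{\{j \in X\}}\big)$, the $-\mathbf{1}_{\{j\in X\}}$ being exactly the trace of the forbidden diagonal; so realizability is equivalent to $f(X)\leqslant 0$ for all $X$, where for $|X|=k$
\begin{equation*}
f(X) \;=\; \sum_{i \in X} d_i^+ \;-\; \sum_{j \in X} d_j^- \wedge (k-1) \;-\; \sum_{j \notin X} d_j^- \wedge k .
\end{equation*}

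The one step you explicitly left open --- that the worst cut of each size $k$ is the lexicographic initial segment --- is indeed the crux, but it goes through, and more easily than you anticipate. If $i \in X$, $i' \notin X$, and $(d_{i'}^+, d_{i'}^-)$ lexicographically dominates $(d_i^+, d_i^-)$, then a direct computation gives
\begin{equation*}
f\big((X \setminus \{i\}) \cup \{i'\}\big) - f(X) \;=\; \big(d_{i'}^+ - d_i^+\big) \;+\; \mathbf{1}_{\{d_{i'}^- \geqslant k\}} \;-\; \mathbf{1}_{\{d_i^- \geqslant k\}},
\end{equation*}
since a vertex entering $X$ trades a contribution $-d^-\wedge k$ for $-d^-\wedge(k-1)$ and vice versa. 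If $d_{i'}^+ > d_i^+$ the first term is at least $1$ and absorbs the worst case $-1$ of the indicators; if $d_{i'}^+ = d_i^+$, the lexicographic tie-break gives $d_{i'}^- \geqslant d_i^-$, so the indicator difference is nonnegative. Iterating such swaps carries any $X$ to the top-$k$ lexicographic segment while never decreasing $f$, so the $n$ stated inequalities dominate the entire exponential family of cuts. This confirms your diagnosis that the lexicographic ordering is exactly what makes the exchange monotone (it is used only in the equal-out-degree case). With this short verification inserted, your proposal is a complete and correct proof, and it is essentially the standard flow-theoretic proof of the theorem.
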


This settled our question for $h=1$. The case $h=2$ had recently been solved by \cite{NDL_physique, NDL} ; a $2$-depth neighborhood is called a \emph{neighborhood degree list} (NDL). In \cite{NDL}, the authors not only settle Question \ref{question} and give a sufficient and necessary condition for a NDL to be graphical, but they also characterize those NDL that are ``unigraphical", meaning that they have a unique graphical realization --- we do not adress this problem, but we solve Question \ref{question} for arbitrary depths $h$.

For $h=1$, the number of labeled graphs with a given degree sequence is asymptotically known in many asymptotic regimes, see notably \cite{MR3317354}, \cite[Theorem 2.16]{MR1864966} and references therein. For general $h$, this question has been recently adressed in  \cite{bordenave2015large} in the regime where the maximal degree is uniformly bounded. The motivation came from the Benjamini-Schramm topology of rooted graphs.

In this paper, we only deal with \emph{universal covering neighborhoods}, thus ignoring the eventual cycles in the $h$-neighborhood of a vertex. If a $h$-depth neighborhood is graphical, then it might as well have very different realizations, for instance ones that are $h$-locally tree-like, or others having many short cycles. When the same question is adressed with \emph{graph} $h$-neighborhoods, Question \ref{question} becomes much more arduous ; a similar problem in graph reconstruction, the famous Kelly-Ulam reconstruction problem, was asked during the 1940s and still remains opened.

\subsection{Definitions and statement of the main result.}

Fix some $h$-depth neighborhood $\mathbf{t}=(t_1, ..., t_n)$. The associated degree sequence $\mathbf{d}=(d_1, ..., d_n)$ is the sequence of degrees of the root $\bullet$ of the tree $t_i$, that is $d_i = \mathrm{deg}_{t_i}(\bullet)$. An obvious necessary condition for $\mathbf{t}$ to be graphical is that $\mathbf{d}$ is itself a graphical sequence, hence satisfying \eqref{EG:even}-\eqref{EG:comp}. From now on, we will assume that $d_1+...+d_n = 2m$ where $m$ is an integer. 

Let $t$ be a tree with depth at most $h$ and root $\bullet$. Let $e$ be an edge incident with the root, say $e=(\bullet, x)$. The tree $t \setminus e$ has exactly two connected components. The connected component containing the root is $r'$ and the other one is $s$ ; we root $s$ at $x$. We erase from $r'$ all the vertices which were at depth exactly $h$ in $t$, and we keep the same root ; this yields a new rooted tree $r$ --- see Figure \ref{fig2}. The \emph{type} of the edge $e$ is defined as the couple of rooted trees $(r,s)$ and we will denote it by $\tau(e)$. 

\begin{figure}[H]\centering
\begin{tabular}{ccccc}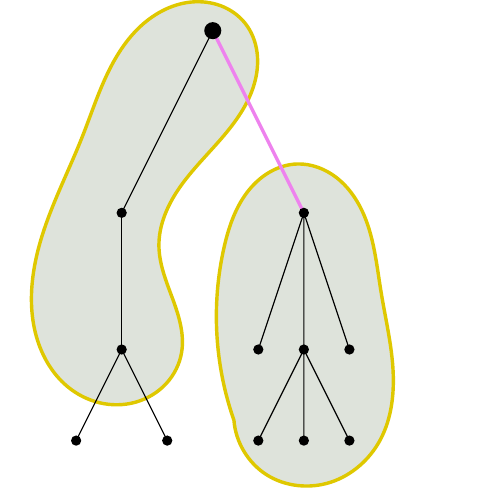&&&&\includegraphics[scale=1.2]{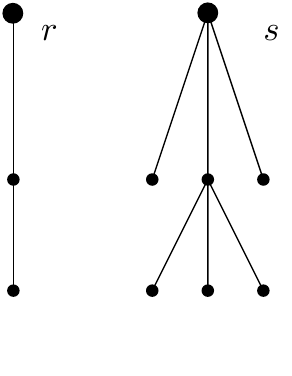}
\end{tabular}
\caption{Construction of the type $\tau(e) = (r,s)$ of edge $e$ in some tree $t$.}\label{fig2}
\end{figure}

If $\tau=(r,s)$ is a type, its \textit{opposite type} $\tau^{-1}$ is defined as $(s,r)$. A type is an element of $\mathscr{T}_{h-1} \times \mathscr{T}_{h-1}$. The set of all types induced by the edges in $\mathbf{t}$ is noted $\mathrm{types}(\mathbf{t})$. It can be decomposed into the disjoint union of three sets 
$$\mathrm{types}(\mathbf{t}) = \Delta \cup A \cup B$$
where 
\begin{itemize}
\item $\Delta$ is the set of ``diagonal" types $\tau = (r,r)$ for some $r \in \mathscr{T}_{h-1}$ ; 
\item $A \cup B$ is the set of types $\tau = (r,s)$ with $r \neq s$, and the sets $A,B$ are chosen such that if $\tau \in A$, then $\tau^{-1} \in B$. 
\end{itemize}

If $\tau \in \mathrm{types}(\mathbf{t})$, we define
\begin{itemize}
\item the $\tau$-degree of any index $i \in [n]$ as the number of edges in $t_i$ incident to the root and whose type is $\tau$. We will denote it by $d^\tau_i$ ; 
\item the $\tau$-number $N_\tau$ as the total number of edges in $\mathbf{t}$ with type $\tau$, that is
$$N_\tau = \sum_{i \in [n]} d^\tau_i.$$
\end{itemize}
It should be clear that if $ i \in [n]$ is a vertex, then $\sum_{\tau \in \mathrm{types}(\mathbf{t})} d^\tau_i = d_i$. 

\begin{theorem}\label{thm:main0}
Let $\mathbf{t}$ be a $h$-depth neighborhood ; it is graphical if and only if it satisfies the following conditions : 
\begin{itemize}
\item for every $\tau \in \Delta$, the integer sequence $(d^\tau_i)_{i \in [n]}$ is graphical ; 
\item for every $\tau\in A$, the integer double sequence $(d^\tau_i, d^{\tau^{-1}}_i)_{i \in [n]}$ is digraphical.
\end{itemize}
\end{theorem}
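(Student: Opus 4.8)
The whole argument rests on a single structural observation about how the two endpoints of an edge perceive it in the universal covering. Fix a realization $G$ and an edge $\{u,v\}$, and lift it to an edge $\{\tilde u,\tilde v\}$ of $T_G$. The plan is to first check that the \emph{back tree} of $\{u,v\}$ read from $u$ coincides with the \emph{forward tree} read from $v$: both equal the ball of radius $h-1$ centred at $\tilde u$ in the forest obtained by deleting the edge $\{\tilde u,\tilde v\}$ from $T_G$, because $\tilde u$ sits at distance one from $\tilde v$. Consequently, if $\tau(e)=(r_u,r_v)$ denotes the type seen from $u$, then the type seen from $v$ is exactly $(r_v,r_u)=\tau(e)^{-1}$; in words, \textbf{every edge carries opposite types at its two ends}. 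This duality drives the necessity direction: the edges of a fixed diagonal type $\tau=(r,r)$ form a simple subgraph of $G$ in which vertex $i$ has degree $d_i^\tau$, so $(d_i^\tau)_i$ is graphical by Theorem~\ref{thm:EG_simple}; while the edges of a fixed off-diagonal type $\tau\in A$, oriented from their $\tau$-end to their $\tau^{-1}$-end, form a digraph in which vertex $i$ has out-degree $d_i^\tau$ and in-degree $d_i^{\tau^{-1}}$, so $(d_i^\tau,d_i^{\tau^{-1}})_i$ is digraphical by Theorem~\ref{thm:EG_directed}.

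For the converse I would isolate a \emph{reconstruction lemma}: if $G$ is a simple graph equipped with a globally consistent typing of its edges --- each edge labelled by a type $\tau$ at one endpoint and by $\tau^{-1}$ at the other --- such that every vertex $i$ carries exactly $d_i^\tau$ edges of label $\tau$ for each $\tau$, then $B_{T_G}(i,h)\cong t_i$. I would prove this by induction on the radius $k\le h$, showing that for every edge $\{u,v\}$ labelled $(r,s)$ at $u$ the ball of radius $k$ around $v$ away from $u$ in $T_G$ equals the truncation $(s)_k$. The base case $k=0$ is trivial. In the inductive step one expands the ball around $v$ into the subtrees hanging from the neighbours $w\neq u$ of $v$; type-consistency at $\{u,v\}$ identifies the back tree read from $v$, which is $s$, with the root $v$ carrying as truncated children the forward trees of these other edges, and the induction hypothesis realizes each such child up to the required level. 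Taking $k=h-1$ and assembling over all edges incident to $i$ then recovers $t_i$.

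It remains to \emph{construct} such a typed simple graph from the hypotheses, and this is where the real work lies. For each diagonal type I would apply Theorem~\ref{thm:EG_simple} to obtain a simple graph $G_\tau$ with degree sequence $(d_i^\tau)_i$, and for each off-diagonal pair $\{\tau,\tau^{-1}\}$ I would apply Theorem~\ref{thm:EG_directed} to obtain a digraph $G_\tau$ with the prescribed in/out-degrees, orienting arcs so that tails see $\tau$ and heads see $\tau^{-1}$. Superimposing all these (di)graphs on the common vertex set $[n]$ yields a loopless graph in which vertex $i$ carries exactly the right multiset of types, so the reconstruction lemma applies --- \emph{provided the superposition is simple}.

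The main obstacle is precisely this simplicity. Two kinds of multiplicity can arise: a $2$-cycle inside an off-diagonal class $G_\tau$, and a coincidence of edges belonging to two different classes on the same pair $\{u,v\}$. I would remove them by type-preserving switchings: a double edge is broken by selecting a second edge of the same type on a disjoint pair and performing the standard $2$-swap, which leaves every type-degree $d_i^\tau$ unchanged and hence preserves the hypotheses of the reconstruction lemma. The delicate point --- and the crux of the theorem --- is to carry out these switchings \emph{globally}, so that repairing one multiplicity never forces another; I expect to control this by a monovariant counting the total edge-multiplicity and showing that a reducing switch is always available as long as the superposition is not simple, or alternatively by recasting the joint realization as a single degree-constrained bipartite flow problem and invoking integrality. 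Once a simple typed realization is produced, the reconstruction lemma certifies that it realizes $\mathbf{t}$, which closes the equivalence.
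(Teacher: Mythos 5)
Your necessity direction and your reconstruction lemma are sound and match the paper, and your construction --- realize each diagonal type via Theorem \ref{thm:EG_simple}, each off-diagonal pair via Theorem \ref{thm:EG_directed}, and superimpose on the common vertex set $[n]$ --- is exactly the paper's. But the step you leave open, simplicity of the superposition, is precisely the crux of the proof, and your plan for it (type-preserving $2$-swaps controlled by a monovariant, or a flow recast ``invoking integrality'') is not carried out and is genuinely problematic as sketched: a swap inside one class $G_\tau$ can create a new coincidence with a different class $G_{\tau'}$, you give no argument that a multiplicity-reducing swap always exists (the ``second edge of the same type on a disjoint pair'' may simply not be available when $N_\tau$ is small), and the bipartite flow formulation is not specified. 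The missing idea, which is the paper's Lemma \ref{simple2}, is that \emph{no repair is needed}: the superposition is automatically simple for every choice of the realizations $G_\tau$, because the type data comes from actual trees and is rigid. Concretely, if the pair $\{i,j\}$ carried edges of two types $\tau=(r,s)$ and $\tau'=(r',s')$, then $t_i$ contains root edges of types $(r,s)$ and $(r',s')$ while $t_j$ contains root edges of the opposite types; comparing the two branch decompositions of the ball $(t_i)_{k+1}$ (through the $\tau$-edge and through the $\tau'$-edge) and peeling off one branch isomorphic to $(s)_k\simeq(s')_k$ shows $(r)_{k+1}\simeq(r')_{k+1}$, and the symmetric step in $t_j$ shows $(s)_{k+1}\simeq(s')_{k+1}$; inducting up to depth $h-1$ yields $\tau=\tau'$, a contradiction.

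The same rigidity kills the $2$-cycles you worry about inside an off-diagonal class: a $2$-cycle between $i$ and $j$ in $\vec{G}_\tau$ with $\tau=(r,s)\in A$ forces $d_i^\tau\geqslant 1$ and $d_i^{\tau^{-1}}\geqslant 1$, so the single tree $t_i$ contains root edges of both types $(r,s)$ and $(s,r)$, and running the identical truncation induction inside $t_i$ alone gives $r=s$, i.e.\ $\tau\in\Delta$, contradicting $\tau\in A$. Note that both arguments use only positivity of the relevant type-degrees, which is a property of the prescribed data $\mathbf{t}$ and not of the particular digraph or graph realizations chosen --- this is why the naive union of edge sets already works. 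Once you replace your switching scheme by this lemma, the rest of your proposal (in particular your typed reconstruction lemma, which is a slightly more careful version of the paper's final induction showing $(t_i)_k=(\bar t_i)_k$ for all $k\leqslant h$) goes through and the equivalence closes as in the paper.
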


Using classical characterizations of graphical and digraphical sequences given earlier in Theorems \ref{thm:EG_simple} and \ref{thm:EG_directed}, this result can be detailed : 

\begin{theorem}\label{thm:main}
Let $\mathbf{t}$ be a $h$-depth neighborhood ; it is graphical if and only if it satisfies the following conditions : 
\begin{itemize}
\item for every $\tau \in \Delta$, the integer $N_\tau$ is even and for every $k \in [n]$ we have
\begin{equation}\label{EG-sym}
\sum_{i=1}^k d^\tau_{(i)} \leqslant k(k-1) + \sum_{i=k+1}^n d_{(i)} \wedge k, 
\end{equation}
\item for every $\tau \in A$, we have $N_\tau = N_{\tau^{-1}}$ and for every $k \in [n]$, we have 
\begin{equation}\label{EG-nonsym}
\sum_{i=1}^k d^\tau_{(i)} \leqslant \sum_{i=1}^k d^{\tau^{-1}}_{(i)} + \sum_{i=k+1}^n d^{\tau^{-1}}_{(i)} \wedge k
\end{equation}
where indices correspond to lexicographic reordering. 
\end{itemize}
\end{theorem}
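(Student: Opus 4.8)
\section{Proof strategy for Theorem~\ref{thm:main}}

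The plan is to derive Theorem~\ref{thm:main} from Theorem~\ref{thm:main0} by substituting, one type class at a time, the explicit Erdős--Gallai characterizations of Theorems~\ref{thm:EG_simple} and~\ref{thm:EG_directed} into the abstract graphicality and digraphicality conditions. All the genuinely combinatorial content --- namely that the realization problem for $\mathbf{t}$ decouples into one independent (di)graph realization problem per type class --- is carried by Theorem~\ref{thm:main0}, which I take as given. The remaining task is therefore pure unpacking: I must show that ``$(d^\tau_i)_i$ is graphical'' is equivalent to ``$N_\tau$ even'' together with~\eqref{EG-sym}, and that ``$(d^\tau_i, d^{\tau^{-1}}_i)_i$ is digraphical'' is equivalent to $N_\tau = N_{\tau^{-1}}$ together with~\eqref{EG-nonsym}.

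For a diagonal type $\tau \in \Delta$, I apply Theorem~\ref{thm:EG_simple} directly to the integer sequence $(d^\tau_i)_{i\in[n]}$. Its total is $\sum_i d^\tau_i = N_\tau$, so the parity requirement~\eqref{EG:even} becomes exactly ``$N_\tau$ is even''; after sorting the $\tau$-degrees decreasingly as $d^\tau_{(1)}\geqslant\cdots\geqslant d^\tau_{(n)}$, the Erdős--Gallai inequalities~\eqref{EG:comp} become~\eqref{EG-sym}. The only point demanding care here is the indexing of the truncated sum on the right-hand side, which must be taken relative to the ordering that sorts the $\tau$-degrees themselves.

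For the off-diagonal types I treat each pair $(\tau, \tau^{-1})$ together. The crucial observation, which is exactly what motivates the definition of the opposite type, is that an edge carrying type $\tau$ at one of its endpoints carries type $\tau^{-1}$ at the other; consequently the type-$\tau$ edges of a realization, oriented from the $\tau$-endpoint to the $\tau^{-1}$-endpoint, form a directed graph in which vertex $i$ has out-degree $d^\tau_i$ and in-degree $d^{\tau^{-1}}_i$. Feeding $(d^\tau_i, d^{\tau^{-1}}_i)_i$ into Theorem~\ref{thm:EG_directed}, the degree-balance hypothesis reads $N_\tau = N_{\tau^{-1}}$, and the directed Erdős--Gallai inequalities, after the prescribed lexicographic reordering, yield~\eqref{EG-nonsym}. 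I then note that it suffices to impose these for $\tau \in A$ only: the system attached to $\tau^{-1}\in B$ is the directed condition for the arc-reversed degree data, and since a bidegree sequence is digraphical if and only if its reverse is, the $B$-conditions are redundant.

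The step I expect to be most delicate is this last bookkeeping, together with pinning down the exact shape of the inequalities. One must check that the truncations and the lexicographic versus decreasing orderings are applied consistently across the two regimes, and --- most subtly --- that the loopless structure coming from the simplicity of the realization, and the fact that between a fixed pair of vertices a non-diagonal type can orient at most one edge (so no $2$-cycles arise), produce precisely the right-hand side of~\eqref{EG-nonsym} rather than a variant carrying the minima $\,\cdot\wedge(k-1)\,$ that appear in the general form of Theorem~\ref{thm:EG_directed}. Once this correspondence between the two families of conditions is verified term by term, the equivalence asserted in Theorem~\ref{thm:main} follows at once from Theorem~\ref{thm:main0}.
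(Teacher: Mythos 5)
Your overall route --- reading Theorem \ref{thm:main} as Theorem \ref{thm:main0} with the two Erd\H{o}s--Gallai characterizations substituted in, one type class at a time, and discarding the $B$-conditions by arc reversal --- is exactly how the paper presents it (the paper gives nothing beyond the sentence ``Using classical characterizations\dots this result can be detailed''). The diagonal case and the $A$/$B$ bookkeeping are fine. But the step you flag as ``most delicate'' and leave as an expectation is a genuine gap, and the mechanism you gesture at would not close it. Substituting Theorem \ref{thm:EG_directed} literally produces the capped inequality $\sum_{i=1}^k d^\tau_{(i)} \leqslant \sum_{i=1}^k d^{\tau^{-1}}_{(i)} \wedge (k-1) + \sum_{i=k+1}^n d^{\tau^{-1}}_{(i)} \wedge k$, which is \emph{stronger} than \eqref{EG-nonsym} as stated; what has to be proved is that for bidegree data coming from an $h$-depth neighborhood, the weaker uncapped condition still implies digraphicality. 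There is no ``term by term'' correspondence to verify, and your proposed explanation --- that no $2$-cycles arise in a realization --- points the wrong way: forbidding $2$-cycles shrinks the class of admissible digraphs, which would call for a \emph{stronger} inequality, not a weaker one.

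What actually bridges the gap is a structural fact sitting inside the second half of the paper's proof of Lemma \ref{simple2}, applied at a single vertex: for $\tau=(r,s)\in A$, no index $i$ can have both $d^\tau_i>0$ and $d^{\tau^{-1}}_i>0$, since if $t_i$ had root edges of both types $(r,s)$ and $(s,r)$, the branch-erasing recurrence gives $(r)_k \simeq (s)_k$ for all $k\leqslant h-1$, hence $r=s$, contradicting $\tau\notin\Delta$. So the supports of $(d^\tau_i)_i$ and $(d^{\tau^{-1}}_i)_i$ are disjoint: every realizing digraph is automatically loopless, $2$-cycle-free, and bipartite from the $\tau$-support to the $\tau^{-1}$-support, and the realization problem is the Gale--Ryser bipartite one. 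Under this disjointness, for $k$ up to the size of the $\tau$-support the first sum on the right of \eqref{EG-nonsym} vanishes (so the missing cap $\wedge(k-1)$ is vacuous and \eqref{EG-nonsym} is exactly Gale--Ryser), while for larger $k$ the inequality follows from monotonicity of the right-hand side; this is what makes the stated form equivalent to digraphicality. A smaller instance of the same issue occurs in \eqref{EG-sym}, whose right-hand side as printed carries the full degrees $d_{(i)}$ rather than $d^\tau_{(i)}$: your reduction silently reads it as the Erd\H{o}s--Gallai inequality for the $\tau$-degrees, which again needs either a correction of the statement or a separate justification. Without the disjoint-support lemma your plan stalls precisely where you predicted it would.
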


Note that those conditions together imply that $(d_1, ..., d_n)$ is itself a graphical sequence (sum over all the types $\tau$), which is a necessary, but clearly non sufficient condition.

\section{Proof of Theorem \ref{thm:main0}.}

We assume without loss of generality $h \geq 2$. The conditions are easily seen to be necessary, for if $\mathbf{t}$ is graphical and $\tau$ is a type, then
\begin{itemize}
\item either $\tau \in \Delta$ and the graph induced in $G$ by keeping only the edges $e$ such that $\tau(e) = \tau$ has $(d^\tau_i)_{i \in [n]}$ has its degree sequence, 
\item either $\tau \notin \Delta$ ; in this case either $\tau \in A$ or $\tau^{-1} \in A$, so without loss of generality we can assume that $\tau \in A$. The graph induced by edges such that $\tau(e) = \tau$ can be oriented : if $e=(i,j) \in G_\tau$, then one vertex $k \in \{i,j\}$ satisfies $\tau(e) = \tau$ in $t_k$. We orient the edge $(i,j)$ from $k$ to the other vertex. This yields a digraph $\vec{G}_\tau$ with oriented bi-degree sequence $(d^\tau_i, d^{\tau^{-1}}_i)_{i \in [n]}$, so the second condition of Theorem \ref{thm:main0} is met. 
\end{itemize}

\bigskip

We now prove the sufficiency. We suppose that $\mathbf{t}$ is a $h$-depth neighborhood satisfying the assumptions in Theorem \ref{thm:main0} and we build a graph $G$ which is a realization of $\mathbf{t}$. We first fix some type $\tau$. 

\begin{itemize}\item We suppose in the first time that $\tau \in A$, in particular $\tau=(r,s)$ with $r \neq s$. As $(d^\tau_i, d^{\tau^{-1}}_i)_{i \in [n]}$ is digraphical, there is some digraph $\vec{G}_\tau$ on $n$ vertices such that $\mathrm{deg}^+_{\vec{G}_\tau}(i) = d^\tau_i$ and $\mathrm{deg}^-_{\vec{G}_\tau}(i) = d^{\tau^{-1}}_i$ for every vertex $i \in [n]$. We now define a (non-directed) multigraph $G_\tau$ by simply forgetting the directions of edges in $\vec{G}_\tau$ --- indeed, this multigraph will be proven to be simple in Lemma \ref{simple2}.
\item Else, if $\tau \in \Delta$, then by assumption $(d^\tau_i)_{i \in [n]}$ is graphical and there is a simple graph $G_\tau$ such that $\mathrm{deg}_{G_\tau}(i) = d^\tau_i$. 
\end{itemize}

We now ``glue together" the graphs $G_\tau$ to get our realization of $\mathbf{t}$, namely $G$. Formally, if $E(G_\tau)$ denotes the set of edges in $G_\tau$, then $G = ([n], E)$ with the edge set $E$ being defined as 
\begin{equation}
E:= \bigcup_{\tau \in \Delta \cup A} E\big( G_\tau \big).
\end{equation}

The following lemma is the crucial ingredient of the proof of Theorem \ref{thm:main0}. 

\begin{lem}\label{simple2}$G$ is a simple graph.
\end{lem}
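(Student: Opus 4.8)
The plan is to check that $G$ has neither a self-loop nor a repeated edge, reducing both to a single rigidity property of the trees $t_i$: a rooted tree cannot carry two distinct root-subtrees that are mutually (truncated) complementary. I begin with a dictionary. Each edge $f$ of $G$ joins two indices $i,j$ and belongs to exactly one $G_\sigma$ ($\sigma\in\Delta\cup A$); reading off the type at each endpoint gives $\theta_i(f)=(r,s)$ and $\theta_j(f)=\theta_i(f)^{-1}=(s,r)$. An out-arc at $i$ in $\vec G_\sigma$ forces $d_i^{\sigma}\ge 1$, an in-arc forces $d_i^{\sigma^{-1}}\ge 1$, and for $\sigma\in\Delta$ one has $\theta_i(f)=\sigma$; in every case $d_i^{\theta_i(f)}\ge 1$ and $d_j^{\theta_j(f)}\ge 1$. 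By the very definition of a type (Figure \ref{fig2}), $d_i^{(r,s)}\ge 1$ means precisely that $t_i$ has a root-edge $e$ whose hanging subtree is $s$ and whose truncated complement — the root component of $t_i\setminus e$ after erasing its depth-$h$ vertices — is $r$. I use this translation throughout, together with the fact that each $\vec G_\sigma$ is a simple loopless digraph and each $G_\sigma$ ($\sigma\in\Delta$) a simple loopless graph by construction.

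The crux is a rigidity statement I would isolate as an auxiliary lemma. Let $t,t'$ be rooted trees of depth at most $h$; suppose $t$ has root-edges with hanging subtrees $x,x'$ and $t'$ has root-edges with hanging subtrees $y,y'$, such that the truncated complement in $t$ of the $x$-edge is $y$ and of the $x'$-edge is $y'$, while the truncated complement in $t'$ of the $y$-edge is $x$ and of the $y'$-edge is $x'$. Then $x=x'$. The proof is a truncation induction, with $(u)_k$ denoting the ball of radius $k$ around the root of $u$ as in the notation $(g)_k$ above. The complement relations say that the multiset of root-subtrees of $x$ agrees with that of $x'$ except that $x$ carries one copy of $(y')_{h-2}$ where $x'$ carries $(y)_{h-2}$; symmetrically, $y$ carries $(x')_{h-2}$ where $y'$ carries $(x)_{h-2}$. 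Hence $(x)_{k+1}=(x')_{k+1}\iff (y)_k=(y')_k$ and $(y)_{k+1}=(y')_{k+1}\iff (x)_k=(x')_k$; since $(x)_0=(x')_0$ and $(y)_0=(y')_0$ hold trivially, a simultaneous induction on $k$ up to $h-1$ yields $x=x'$.

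With this lemma the conclusion is quick. Self-loops cannot occur because every $\vec G_\sigma$ and every $G_\sigma$ is loopless. Suppose two edges $f,f'$ join $i$ and $j$, and write $\theta_i(f)=(r,s)$, $\theta_i(f')=(r',s')$. If $s=s'$ then $r=r'$ (within fixed $t_i$ the truncated complement is determined by the hanging subtree), so $f,f'$ carry the same type and lie in the same $G_\sigma$; both are then arcs $i\to j$ of the simple digraph $\vec G_\sigma$ (if $\sigma\in A$) or parallel edges of the simple graph $G_\sigma$ (if $\sigma\in\Delta$), contradicting simplicity of the realization. If $s\neq s'$, then $s,s'$ are root-subtrees of $t_i$ with truncated complements $r,r'$, and $r,r'$ are root-subtrees of $t_j$ with truncated complements $s,s'$; applying the rigidity lemma to $t=t_i$, $t'=t_j$, $(x,x')=(s,s')$, $(y,y')=(r,r')$ forces $s=s'$, a contradiction. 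In particular, the same lemma with $t=t'=t_i$ shows that no index satisfies both $d_i^\tau\ge 1$ and $d_i^{\tau^{-1}}\ge 1$ for $\tau\in A$, i.e. $\vec G_\tau$ has no directed $2$-cycle. Therefore $G$ is simple.

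I expect the case $s\neq s'$ to be the genuine difficulty. The bidegree sequences realized by $\vec G_\tau$ and $\vec G_{\tau'}$ for different types live on independent digraphs that share no combinatorial information, so at the level of degrees nothing forbids them from joining the same pair $i,j$; it is only the rigidity of the trees, encoded in the definition of the types, that rules this out. A related point worth flagging is that a generic digraphical bidegree sequence does admit $2$-cycles, so the source/sink dichotomy that keeps each $G_\tau$ simple is itself a consequence of the rigidity lemma rather than of the digraph realization of Theorem \ref{thm:EG_directed}.
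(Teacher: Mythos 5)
Your proof is correct, and its engine is the same as the paper's: a simultaneous induction on the truncation depth $k$ that shuttles isomorphism back and forth between the two complementary pairs of subtrees by cancelling common branches --- your multiset cancellation is exactly the paper's step ``erase both branches pending at $e$ and $e'$'' in the decomposition $(T_i)_{k+1}=e\cup (S)_k\cup (R)_{k+1}$. The difference is organizational, and it is a real gain in economy: the paper runs this induction twice, once to show that a double edge cannot lie in two distinct $G_\tau$, $G_{\tau'}$ (forcing $\tau=\tau'$), and a second time to exclude directed $2$-cycles in $\vec{G}_\tau$ for $\tau\in A$ (forcing $r=s$, contradicting $\Delta\cap A=\emptyset$), whereas you isolate a single abstract rigidity lemma about two trees carrying two complementary pairs of root-edges and derive both cases from it, the $2$-cycle case being the instance $(x,x')=(s,r)$, $(y,y')=(r,s)$. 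Your unification also yields a strictly stronger structural fact that the paper never states: no single index $i$ can have $d_i^{\tau}\geqslant 1$ and $d_i^{\tau^{-1}}\geqslant 1$ for $\tau\in A$, so every vertex of $\vec{G}_\tau$ is a source or a sink, while the paper only excludes $2$-cycles between the specific pair $i,j$ carrying the double edge. Two cosmetic points: in your case $s=s'$, the two parallel arcs may both be oriented $j\to i$ rather than $i\to j$ when $(r,s)$ lies in $B$, which changes nothing; and in the rigidity lemma you should dispatch the degenerate situations $x=x'$ or $y=y'$ first (they follow at once from your own observation that, within a fixed tree, the truncated complement is determined by the hanging subtree), since the multiset relation as written presupposes that the two marked root-edges are distinct.
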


\begin{proof}
Suppose that $G_\tau$ contains a double edge, for instance $(x,y)$. We are going to prove the two following facts : 
\begin{enumerate}
\item first, this double edge can not arise from two distinct $G_\tau$. In other words, if $(x,y) \in G_\tau$, then $(x,y) \notin G_{\tau'}$ for every $\tau'\neq \tau$ ; 
\item then we check that for every $\tau \in  A$, the multi-graph $G_\tau$ contains no double edge.
\end{enumerate}

Together, those two facts imply that $G$ is simple : indeed, if there is a double edge, then it can only belong to a single $G_\tau$ ; but if $\tau \in A$, $G_\tau$ cannot contain any double edge, and if $\tau \in \Delta$ then $G_\tau$ is simple by construction, hence the conclusion.

\bigskip

Suppose that there is a double edge between vertices  $i$ and $j$, one belonging to $G(\tau)$ and the other to $G(\tau')$ for two types $\tau=(r,s)$ and $\tau'=(r',s')$. We prove that $\tau=\tau'$. As manipulating unlabeled rooted trees is quite inconvenient, we will work with two labeled rooted trees $T_i, T_j$ in the equivalence classes of $t_i, t_j$, and the same with $R,R',S,S'$ which are representatives of the equivalence classes of $r,r',s,s'$. We are going to prove that $R \simeq R'$ and $S \simeq S'$ (as rooted labeled trees) , hence proving $r=r'$ and $s=s'$ as needed. The following arguments are illustrated in Figure \ref{fig_simple}. 

\begin{itemize}
\item The presence of an edge between $i$ and $j$ in $G_\tau$ has the following consequence : there is an edge $e$ in $T_i$, adjacent with the root, such that $T_i \setminus e$ has two connected components, one isomorphic with $S$ and the other having its ball of radius $h$ isomorphic with $R$. On the other hand, as $(i,j) \in G(\tau')$, there is an edge $e'$ such that $T_i \setminus e'$ has one component isomorphic with $S'$ and the ball of radius $h-1$ of the other is isomorphic with $R'$.
\item The same holds with $T_j$. 
\end{itemize}

It is clear that $\mathrm{deg}_R(\bullet)+1 = \mathrm{deg}_{T_i}(\bullet)=d_i$ and also $\mathrm{deg}_{R'}(\bullet)+1 =d_i$, hence $\mathrm{deg}_R(\bullet)=\mathrm{deg}_{R'}(\bullet)$. The same is true with $S,S'$ ; we have just proven that $(R)_1 \simeq (R')_1$ and $(S)_1 \simeq (S')_1$. We are now going to prove that if $(S)_k\simeq (S')_k$ and $(R)_k \simeq (S')_k$ for some $k<h-1$, then this is also true with $k+1$. 

First, the ball $(T_i)_{k+1}$ can be decomposed in two ways shown in Figure \ref{fig_simple} : 
$$(T_i)_{k+1} = e \cup (S)_k \cup (R)_{k+1} \qquad \text{and} \qquad (T_i)_{k+1} =e' \cup (S')_k \cup (R')_{k+1} $$
but as $(S)_k \simeq (S')_k$, we can erase both branches pending at $e$ and $e'$, to get $(R)_{k+1} \simeq (R')_{k+1}$. The same idea applies to $T_j$, to show that $(S)_{k+1} \simeq (S')_{k+1}$, hence closing the recurrence. We have proven that $(S)_{h-1} \simeq (S')_{h-1}$ and $(R)_{h-1} \simeq (R')_{h-1}$, thus $r=r'$ and $s=s'$ as needed.  We thus have proven the first point exposed earlier.

\begin{figure}[H]\centering
\begin{tabular}{cc}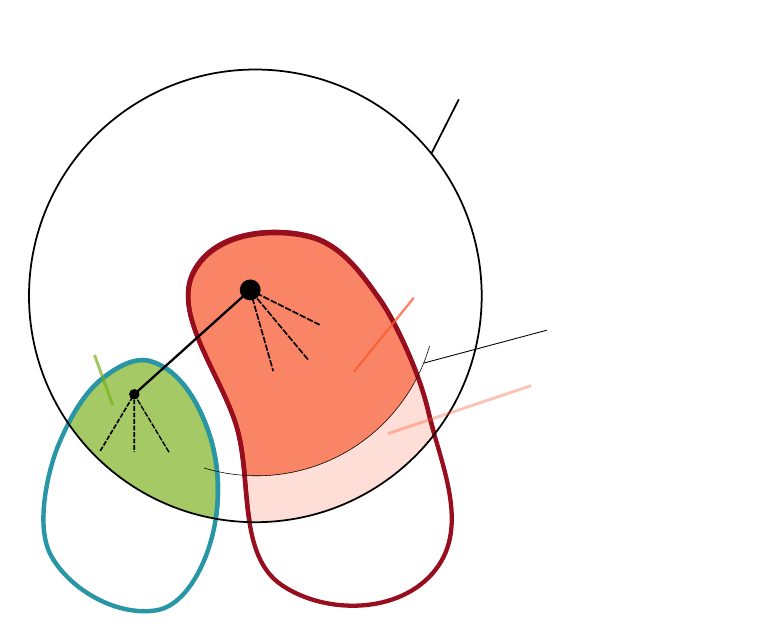&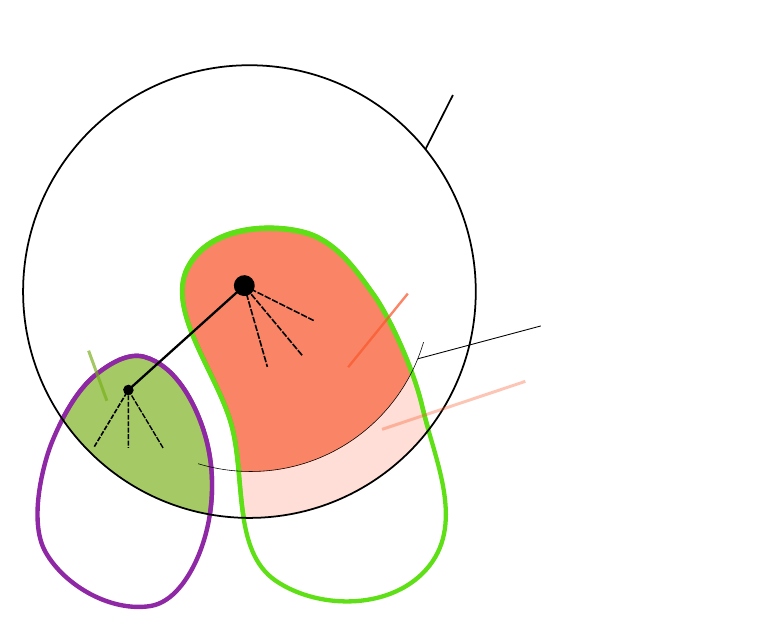 
\end{tabular}
\caption{An illustration of the proof of Lemma \ref{simple2}. The green parts represent $(S)_k$ and $(S')_k$ which are isomorphic (as recurrence hypothesis) and the dark red parts are representing $(R)_k$ and $(R')_k$, which are isomorphic too ; hence, the light pink parts are also isomorphic, thus proving $(R)_{k+1} \simeq (R')_{k+1}$. A similar procedure applies to $T_j$.}\label{fig_simple}
\end{figure}

We now check the second point, i.e. that for every $\tau \in A$, the multi-graph $G_\tau$ is indeed a simple graph. The proof runs along the same lines : suppose that there is a double directed edge between $i$ and $j$ in $\vec{G}_\tau$. This can only happen if $(i,j)$ and $(j,i)$ are both directed edges in $\vec{G}_\tau$. We suppose that $\tau = (r,s)$, and with a recurrence we prove that $r=s$, hence $\tau \in \Delta$ which had been discarded since $\Delta \cap A = \emptyset$. 

To do this, first check that $\mathrm{deg}_r(\bullet)=\mathrm{deg}_s(\bullet)$, then suppose that for some $k<h$, we have $(r)_k = (s)_k$ and prove that $(r)_{k+1} = (s)_{k+1}$. This step uses the exact same procedure as before.
\end{proof}

We now check that $G$ solves our problem. 

\begin{lem}$G$ is a realization of $\mathbf{t}$.
\end{lem}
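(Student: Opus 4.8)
The plan is to identify the $h$-depth universal covering neighborhood of a vertex $i$ with the \emph{non-backtracking unfolding} of $G$ around $i$, and then to check by induction that this unfolding reproduces $t_i$. Recall that $T_G$ is the tree of reduced (non-backtracking) walks from a fixed lift of $i$, so its ball of radius $h$ admits a recursive description: let $U_k(i)$ be the rooted tree whose root, identified with $i$, carries one child-subtree per edge $e=\{i,j\}$ incident to $i$, this subtree being $W_{k-1}(i\to j)$; and let $W_k(i\to j)$ be the rooted tree with root $j$ carrying the subtrees $W_{k-1}(j\to l)$ for every edge $\{j,l\}$ incident to $j$ \emph{other than} $e$ (so $W_k$ forbids immediate backtracking along $e$). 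Then $U_h(i)$ is exactly the ball $(T_G)_h$ around a lift of $i$, and the goal reduces to proving $U_h(i)\simeq t_i$ for every $i\in[n]$.

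First I would record the bookkeeping produced by the construction. By the first point of Lemma \ref{simple2}, every edge $e$ of $G$ lies in a single $G_\tau$, which lets me assign to each endpoint a \emph{local type}: if $e\in G_\tau$ with $\tau=(r,r)\in\Delta$, set $\tau_i(e)=\tau_j(e)=(r,r)$; if $e\in G_\tau$ with $\tau\in A$ oriented as $i\to j$ in $\vec{G}_\tau$, set $\tau_i(e)=\tau$ and $\tau_j(e)=\tau^{-1}$. From the defining degree sequences of the $\vec{G}_\tau$ and $G_\tau$ one reads off, for every vertex $i$ and every type $\sigma$, that the number of edges $e\ni i$ with $\tau_i(e)=\sigma$ equals $d^\sigma_i$ (the types of $B$ being counted through in-degrees), and that $\tau_j(e)=\tau_i(e)^{-1}$ for every edge. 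In particular, at each vertex $i$ the multiset of local edge-types coincides with the multiset of types of the root-edges of $t_i$.

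The heart of the argument is the claim that, for every oriented edge $i\to j$ with $\tau_i(e)=(r,s)$, one has $W_k(i\to j)\simeq (s)_k$ for all $0\le k\le h-1$, proved by induction on $k$; the case $k=0$ is the equality of two single roots. For the inductive step I expand both sides. On one hand $W_k(i\to j)$ is the root $j$ carrying the subtrees $W_{k-1}(j\to l)\simeq (s_l)_{k-1}$ (induction hypothesis), one per edge $\{j,l\}\neq e$, where $\tau_j(\{j,l\})=(r_l,s_l)$. On the other hand, since $\tau_j(e)=(s,r)$, the tree $s$ is exactly the root-side truncation of the edge $e$ in $t_j$: its root is $j$, and below each \emph{other} root-edge $e'$ of $t_j$, of type $(r',s')$, it carries the subtree $(s')_{h-2}$, so that $(s)_k$ carries $(s')_{k-1}$ below each such $e'$. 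The two decompositions agree termwise because, by the previous paragraph, the edges $\{j,l\}\neq e$ at $j$ in $G$ and the root-edges $e'\neq e$ of $t_j$ realise the same multiset of types — in both cases the full type-multiset of $j$ with one copy of $(s,r)=\tau_j(e)$ deleted. This closes the induction, and $k=h-1$ gives $W_{h-1}(i\to j)\simeq s$. Plugging this into the decomposition of $U_h(i)$ and using once more that the edge-type multiset at $i$ equals the root-edge-type multiset of $t_i$ yields $U_h(i)\simeq t_i$, as desired.

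The step I expect to require the most care is the depth bookkeeping in the inductive claim: one must track how the two independent truncations interact — erasing the depth-$h$ vertices when forming a type $(r,s)$, versus truncating the unfolding at level $k$ — and check that deleting the single root-edge of type $(s,r)$ from $t_j$ corresponds precisely to forbidding backtracking along $e$ in the unfolding. Once the local-type dictionary and the symmetry $\tau_j(e)=\tau_i(e)^{-1}$ are in place, the remainder is a routine comparison of the two recursive descriptions.
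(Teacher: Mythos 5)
Your proof is correct, and at the top level it pursues the same strategy as the paper --- an induction on the depth $k$, comparing truncated neighborhoods in the universal covering of $G$ with the truncations $(t_i)_k$ --- but your induction hypothesis is genuinely different, and stronger, in a way worth noting. The paper inducts on the per-vertex statement $(t_i)_k=(\bar t_i)_k$ (where $\bar t_i$ is the neighborhood of $i$ in the universal cover) and settles the inductive step with ``this readily implies'': as literally stated, that step does not close, because knowing the unlabeled $k$-balls $(\bar t_j)_k$ of all neighbors $j$ of $i$ does not by itself determine $(\bar t_i)_{k+1}$ --- one must know \emph{which} root branch of $(\bar t_j)_k$ corresponds to backtracking along the edge towards $i$. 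Your per-oriented-edge statement $W_k(i\to j)\simeq (s)_k$, combined with the local-type dictionary $\tau_j(e)=\tau_i(e)^{-1}$ and the count of edges of local type $\sigma$ at a vertex $i$ being $d^\sigma_i$, is exactly the mechanism that supplies this missing correspondence, so your write-up is in effect the rigorous version of the paper's sketch. Two details deserve a word. First, the termwise matching in your inductive step implicitly uses that deleting the branch below \emph{any} root-edge of $t_j$ of type $(s,r)$ yields the same truncated complement $s$; this is fine because two root-edges of equal type have isomorphic branches, so the complements are isomorphic, but it should be said, since no canonical root-edge of $t_j$ is singled out by the edge $e$ of $G$. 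Second, the well-definedness of your local types needs both points of Lemma \ref{simple2}, not only the first: membership of an edge in a single $G_\tau$ is the first point, but excluding an edge carrying both orientations inside a single $\vec{G}_\tau$ (which would make $\tau_i(e)$ ambiguous) is the second. With those two clarifications your argument is complete, and the depth bookkeeping --- $(s)_k$ carrying $(s')_{k-1}$ below each remaining root-edge, consistently with the $h-2$ truncation in the definition of a type --- is handled correctly.
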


\begin{proof}
We want to show that the $h$-neighborhood of any vertex $i$ in the universal cover of $G$ matches $t_i$. We show by strong recurrence that for $k \leqslant h$, if $\bar{t}_i$ denotes the $h$-neighborhood of $i$ in the universal cover, then for every $i \in [n]$ we have $(t_i)_k =(\bar{t}_i)_k$. It is clear by our construction of $G$ that $\mathrm{deg}_G(i) = \sum_\tau d^\tau_i = d_i$, hence $(t_i)_1 = (\bar{t}_i)_1$. Now suppose that $(t_i)_k =(\bar{t}_i)_k$ for some $k<h$. If $N_G(i)$ is the set of neighbors of $i$ in $G$, then for every $j \in N_G(i)$ we have $(t_j)_k=(\bar{t}_j)_k$ by the recurrence hypothesis. This readily implies that $(t_i)_{k+1} = (\bar{t}_i)_{k+1}$, hence the lemma is proven.
\end{proof}

\bibliography{bibli}

\newcommand{\etalchar}[1]{$^{#1}$}
\begin{thebibliography}{BGE{\etalchar{+}}15}

\bibitem[BC15]{bordenave2015large}
Charles Bordenave and Pietro Caputo.
\newblock Large deviations of empirical neighborhood distribution in sparse
  random graphs.
\newblock {\em Probability Theory and Related Fields}, 163(1-2):149--222, 2015.

\bibitem[BD17]{NDL}
Michael~D Barrus and Elizabeth~A Donovan.
\newblock Neighborhood degree lists of graphs.
\newblock {\em Discrete Mathematics}, 2017.

\bibitem[Ber14]{annabell}
Annabell Berger.
\newblock A note on the characterization of digraphic sequences.
\newblock {\em Discrete Mathematics}, 314:38--41, 2014.

\bibitem[BGE{\etalchar{+}}15]{NDL_physique}
Kevin~E Bassler, Charo I~Del Genio, Péter~L Erdős, István Miklós, and
  Zoltán Toroczkai.
\newblock Exact sampling of graphs with prescribed degree correlations.
\newblock {\em New Journal of Physics}, 17(8):083052, 2015.

\bibitem[Bol01]{MR1864966}
B\'ela Bollob\'as.
\newblock {\em Random graphs}, volume~73 of {\em Cambridge Studies in Advanced
  Mathematics}.
\newblock Cambridge University Press, Cambridge, second edition, 2001.

\bibitem[EG60]{ERDOSGALLAI}
Paul Erd\H{o}s and Tibor Gallai.
\newblock Graphs with prescribed degrees of vertices (hungarian).
\newblock {\em Mat. Lapok}, 11:264--274, 1960.

\bibitem[Jan14]{MR3317354}
Svante Janson.
\newblock The probability that a random multigraph is simple. {II}.
\newblock {\em J. Appl. Probab.}, 51A(Celebrating 50 Years of The Applied
  Probability Trust):123--137, 2014.

\bibitem[SH91]{EGSURV}
Gerard Sierksma and Han Hoogeveen.
\newblock Seven criteria for integer sequences being graphic.
\newblock 15:223 -- 231, 06 1991.

\bibitem[TV03]{tripathi2003note}
Amitabha Tripathi and Sujith Vijay.
\newblock A note on a theorem of erd{\H{o}}s \& gallai.
\newblock {\em Discrete Mathematics}, 265(1):417--420, 2003.

\end{thebibliography}

\end{document}